\documentclass[11pt,reqno]{amsart}
\usepackage{amssymb}
\usepackage{amsmath}
\usepackage[T1]{fontenc}
\usepackage[utf8]{inputenc}
\usepackage{mathtools}
\usepackage{enumitem}
%Some basic formatting
\usepackage{anysize}
\usepackage[colorlinks]{hyperref}
\marginsize{2.5cm}{2.5cm}{2.5cm}{2.5cm}
\linespread{1.3}

\newcommand\numberthis{\addtocounter{equation}{1}\tag{\theequation}}

\newcommand{\Z}{\mathbb{Z}}

\newcommand{\Q}{\mathbb{Q}}
\newcommand{\QQ}{\mathbb{Q}}

\newcommand{\mmod}{\hspace{-5pt}\mod}

%Environments
\theoremstyle{plain}
\newtheorem{theorem}{Theorem}
\newtheorem{proposition}[theorem]{Proposition}

\theoremstyle{remark}
\newtheorem{remark}[theorem]{Remark}
\newtheorem{conjecture}[theorem]{Conjecture}

%Title, authors, etc...
\title{Rational $D(q)$-quintuples}

\author{Goran Dra\v zi\' c}
\address{Faculty of Food Technology and Biotechnology, University of Zagreb, Pierottijeva 6, 10000 Zagreb, Croatia}
\email{gdrazic@pbf.hr}

\keywords{}

%\subjclass[2010]{11D09, 11G05, 11Y50}

\begin{document}

%Abstract
\begin{abstract}

For a nonzero rational number $q$, a \emph{rational $D(q)$-$n$-tuple} is a set of $n$ distinct nonzero rationals $\{a_1, a_2, \dots, a_n\}$ such that $a_ia_j+q$ is a square for all $1 \leqslant i < j \leqslant n$. We investigate for which $q$ there exist
infinitely many rational $D(q)$-quintuples. We show that assuming the Parity Conjecture for the twists
of several explicitly given elliptic curves, the density of such $q$ is at least $295026/296010\approx 99.5\%$.

\end{abstract}

%Title
\maketitle
%Introduction
\section{Introduction}

Let $q\in \QQ$ be a nonzero rational number. A set of $n$ distinct nonzero rationals $\lbrace a_1, a_2, \dots, a_n\rbrace$ is called a rational $D(q)$-$n$-tuple if $a_ia_j+q$ is a square for all $1 \leqslant i<j\leqslant n.$ If $\lbrace a_1,a_2,\dots,a_n\rbrace$ is a rational $D(q)$-$n$-tuple, then for all nonzero $r\in \QQ,\lbrace ra_1,ra_2,\dots,ra_n\rbrace$ is a $D(qr^2)$-$n$-tuple, since $(ra_1)(ra_2)+qr^2=(a_1a_2+q)r^2$. With this in mind, we restrict to square-free integers $q.$
For a historical overview of Diophantine $m$-tuples and rational $D(q)$-$m$-tuples, we refer the reader to \cite{DujeAMS}, \cite[Sections 14.6 and 16.7.]{DujeTB}, as well as the webpage of Andrej Dujella. \footnote{https://web.math.pmf.unizg.hr/\textasciitilde duje/dtuples.html}

The goal of this paper is to find squarefree integers $q$ for which there exist infinitely many rational $D(q)$-quintuples. In \cite{ANote}, Dujella proved there exist infinitely many rational $D(q)$-quadruples for every rational $q,$ and in \cite{DraKa}, Dražić and Kazalicki, for given $q\in \QQ,$ parametrized all $m\in \QQ$ such that there exists a rational $D(q)$-quadruple $(a,b,c,d)$ with $abcd=m.$ Dujella and Fuchs in \cite{RatQuint} proved that, assuming the Parity Conjecture for the twists of an explicitly given elliptic curve (isomorphic to $E^{(7)},$ details in Tables \ref{table:1},\ref{table:2}), the density of $q\in \QQ$ such that there exist infinitely many rational $D(q)$-quintuples is at least $1/2$. In this paper, also assuming the Parity Conjecture for twists of explicit elliptic curves, we improve the density bound to at least $295026/296010\approx 99.5\%$. 

In \cite{Euler}, Dujella constructed rational $D(q)$-quintuples the form $\lbrace A, B, C, D, x^2 \rbrace$, with $q=\alpha x^2.$ In Section \ref{sec:02} we expand his construction. In Section \ref{sec:03}, we define the curve $\mathcal{C} / \Q(u)$ by
\begin{equation}\label{eq01}
\mathcal{C} \colon \quad z_1^2=f_4(u)c^4+f_3(u)c^3+f_2(u)c^2+f_1(u)c+f_0(u),
\end{equation}
where $f_i(u)$ are rational functions in $\QQ(u)$ explicitly stated at (\ref{eq22}).

The curve $\mathcal{C}$ has a rational point when $c=1,$ so it is birationally equivalent to an elliptic curve $E/\QQ(u).$ The Mordell-Weil group $E(\QQ(u))$ has rank at least five, as we found five independent rational points, which we list at (\ref{eq23}).

Let $q(u)$ be a rational function in variable $u,$ not identically zero. We call a set of $n$ distinct, not identically zero rational functions $\lbrace a_1(u), a_2(u), \dots, a_n(u)\rbrace$ a \textit{$D(q(u))$-$n$-tuple with elements in $\Q(u),$} if $a_i(u)a_j(u)+q(u)=h^2_{i,j}(u), \: h_{i,j} \in \QQ(u)$ for all $1\leq i < j \leq n.$ We will refer to such quintuples more briefly as $D(q(u)$-quintuples.

Every rational point on $E$ determines a $D\left(\alpha(u) x(u)^2\right)$-quintuple $\lbrace A(u),B(u),C(u),D(u),x^2(u)\rbrace,$ provided that no two elements of the quintuple are equal and that no element nor $\alpha(u)$ is identically zero. This connection is \hyperlink{EnaPetorku}{explained} in Section \ref{sec:03}

Fix a squarefree $q\in \mathbb{Z}$ and assume for a moment that $\alpha(u_1) x(u_1)^2=qs_1^2$ for some rationals $u_1,s_1$ such that $s_1\neq 0.$ Then $\lbrace A(u_1)/s_1, B(u_1)/s_1, C(u_1)/s_1, D(u_1)/s_1, x^2(u_1)/s_1 \rbrace$ is a rational $D\left(\alpha(u_1)\cdot \left(x(u_1)/s_1\right)^2\right)$- quintuple, that is, a rational $D(q)$-quintuple. The following reasoning was used by Dujella and Fuchs in \cite{RatQuint}: If we find infinitely many rationals $(u_1,s_1)$ such that
\begin{equation}\label{eq02}
\alpha(u_1)=q\left(\frac{s_1}{x(u_1)}\right)^2
\end{equation} 
then there are infinitely many rational $D(q)$-quintuples.

Let $P(u)$ be the squarefree polynomial such that 
\begin{equation}\label{eq03}
 P(u)\equiv \alpha(u) x(u)^2 \mod (\QQ(u)^{\ast})^2. 
\end{equation}
$P(u)$ is uniquely determined up to scaling by a rational square. Solving (\ref{eq02}) is the same as finding a rational solution $(u_1,s_1)$ of
\begin{equation}\label{eq04}
P(u)=qs^2.
\end{equation} 
If $\deg(P(u))\geq 5,$ then equation (\ref{eq04}) defines a curve of genus at least two, which by Faltings' theorem has only finitely many solutions.
Thus, only if $\deg(P(u))\in \lbrace 1, 2, 3, 4\rbrace$ can we hope to find infinitely many solutions $(u_1,s_1)$ of (\ref{eq04}) and therefore, in this way, infinitely many rational $D(q)$-quintuples. 

We found eight points $Q_i \in E(\QQ(u)), i\in \lbrace 1, \dots, 8 \rbrace,$ details in Table \ref{table:1}, each of them determining a $D(q(u))$-quintuple, such that the polynomial $P_{Q_i}(u)$ arising from the $D(q(u))$-quintuple is of degree three or four.

Define the curves
\[
E^{(i)}_{q}\colon \: P_{Q_i}(u)=qs^2,
\] 
for a fixed squarefree $q\in \mathbb{Z}$ and $i\in \lbrace 1 \dots 8\rbrace.$ If $q=1$ we write $E^{(i)}$ instead of $E^{(i)}_{1}.$ Each $E^{(i)}_{q}$ is a quadratic twist by $q$ of the curve $E^{(i)}.$

We want to find rational points on the curves $E^{(i)}_{q}.$ Let us look at a concrete example when $i=6,$ for which we have $P_{Q_6}(u)=4u^4 - 20u^3 + 13u^2 + 12u$. For each $q$, the curve $E^{(6)}_{q}$ has a rational point since $P_{Q_6}(u)$ has a rational zero $u=0.$ It follows that each $E^{(6)}_{q}$, as it is a curve of genus one, is birationally equivalent to an elliptic curve over $\QQ.$  

We want to classify squarefree $q\in \mathbb{Z}$ for which the rank of $E^{(6)}_{q}(\QQ)$ is positive. For such $q,$ equation (\ref{eq04}) has infinitely many rational solutions.

Let $E/\QQ$ be an elliptic curve. The \textit{root number} $W(E)$ is defined as the product of the local root numbers $W_p(E)\in \lbrace \pm 1 \rbrace:$
$$W(E)=\prod_{p\leq \infty} W_p(E),$$

\noindent where $p$ is a finite or infinite place of $\QQ.$ The local factors have the property that $W_p(E)=1,$ for all but finitely many $p.$ The definition of the local root number and their properties are explained in detail in e.g. \cite{Rohrlich2}. Rohrlich \cite{Rohrlich} provides an explicit formula for $W_p(E)$ when $p$ is not equal to $2$ or $3$ in terms of reduction types of $E.$ The remaining cases when $p=2$ or $p=3$ were covered by Halberstad \cite{Halb}. Rizzo \cite{Rizzo} gave a complete overview in English while removing some minimality conditions from the tables in \cite{Halb}.

The Birch and Swinnerton-Dyer conjecture implies the following

\begin{conjecture}[The Parity Conjecture]
Let $E/\QQ$ be an elliptic curve, then $\displaystyle (-1)^{\text{rank}E(\QQ)}=W(E).$
\end{conjecture}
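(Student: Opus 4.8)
\emph{Proof proposal.} Since the statement is flagged as a conjecture, the plan is not to prove it unconditionally but to make precise the implication asserted in the sentence preceding it, namely that it is a consequence of the Birch and Swinnerton--Dyer conjecture, and then to record what is known without BSD. The derivation has two inputs. First, by the modularity theorem $E$ is modular, so $L(E,s)$ extends to an entire function and satisfies a functional equation under $s\mapsto 2-s$ whose sign is exactly the global root number $W(E)$; hence $L(E,s)$ vanishes at the center $s=1$ to even order precisely when $W(E)=1$, i.e.\ $(-1)^{\operatorname{ord}_{s=1}L(E,s)}=W(E)$. Second, the rank part of the Birch and Swinnerton--Dyer conjecture asserts $\operatorname{rank}E(\Q)=\operatorname{ord}_{s=1}L(E,s)$. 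Combining the two equalities gives $(-1)^{\operatorname{rank}E(\Q)}=W(E)$. So the steps are: (i) invoke modularity to obtain the functional equation and identify its sign with $W(E)$; (ii) invoke the BSD rank equality; (iii) compare parities.

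The genuine obstacle is step (ii): the equality of algebraic and analytic rank is established only in analytic ranks $0$ and $1$ (Kolyvagin, Gross--Zagier), so the conjecture as stated remains open in general. The strongest unconditional substitute is the $p$-parity conjecture, $(-1)^{\operatorname{corank}_{\Z_p}\operatorname{Sel}_{p^\infty}(E)}=W(E)$, which is a theorem of T.\ and V.\ Dokchitser for every $E/\Q$ and every prime $p$; it upgrades to the Parity Conjecture as soon as one knows the $p$-primary part of the Tate--Shafarevich group is finite for some $p$, since then the $p$-Selmer corank equals $\operatorname{rank}E(\Q)$. One would present the argument in that two-layer form, flagging the finiteness of $\Sha$ (equivalently the full BSD rank statement) as the only missing ingredient.

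For the application in this paper the full conjecture is heavier than what is logically required: it is used only for the explicit quadratic-twist families $E^{(i)}_q$, and for any single squarefree $q$ the global root number $W(E^{(i)}_q)$ is computable from the local formulas of Rohrlich together with the tables of Halberstadt and Rizzo, while a $2$-descent could in principle certify the sign of the rank for that particular $q$. What forces the appeal to the conjecture is that the density result concerns infinitely many $q$ at once, so a twist-by-twist verification is unavailable; accordingly one accepts the Parity Conjecture for these families exactly as in \cite{RatQuint} and proceeds to compute $W(E^{(i)}_q)$ as a function of $q$ modulo suitable congruence conditions.
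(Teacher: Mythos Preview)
The paper offers no proof of this statement: it is labelled a conjecture and is merely asserted to follow from BSD, with no argument given. You correctly identify this and your sketch of the implication (modularity $\Rightarrow$ functional equation with sign $W(E)$, then BSD rank equality $\Rightarrow$ parity match) is the standard and correct derivation; your remarks on the Dokchitsers' $p$-parity theorem and on why the paper must assume the conjecture rather than verify ranks twist-by-twist are accurate supplementary context that goes well beyond what the paper itself provides.
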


An immediate consequence of this conjecture is that the rank of $E(\QQ)$ is positive whenever $W(E)=-1,$ in which case we have infinitely many rational points on $E.$

Assume the Parity Conjecture holds for all twists of the curves $E^{(i)}, i\in \lbrace 1, \dots, 8\rbrace.$ Using Desjardins \cite{Desj}, we obtain results for squarefree $q \mod N_i$ in the form of the following theorem.

\begin{theorem}\label{thm:02}
The functions $q \mapsto W\left(E^{(i)}_{q}\right)$ and $q \mapsto W\left(E^{(i)}_{-q}\right)$ are periodic on squarefree $q\in \mathbb{N}$ with period $N_i.$ Consequently, assuming the Parity Conjecture, the functions $q \mapsto \text{Rank}\left(E^{(i)}_{q}\right) \hspace{-5pt}\mod 2$ and $q \mapsto \text{Rank}\left(E^{(i)}_{-q}\right) \hspace{-5pt}\mod 2$ are periodic on squarefree $q\in \mathbb{N}$ with period $N_i.$
\end{theorem}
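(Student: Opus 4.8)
The plan is to reduce the statement to a finite computation governed by Rohrlich's and Halberstad--Rizzo's explicit formulas for the local root numbers. Fix $i$ and write $E = E^{(i)}$, with minimal model having discriminant $\Delta$ and conductor of bad primes contained in a fixed finite set $S$. For a squarefree positive integer $q$, the twist $E^{(i)}_q$ is the quadratic twist $E_q$, and one has the standard product decomposition
\[
W(E_q) = \prod_{p \leqslant \infty} W_p(E_q).
\]
First I would show that for each finite place $p$, the local factor $W_p(E_q)$ depends only on $E$ (fixed) and on the class of $q$ in a suitable finite quotient: away from $2$, $3$, and the primes of $S$, Rohrlich's formula gives $W_p(E_q)$ purely in terms of $v_p(q)$ together with $\left(\frac{-1}{p}\right)$-type symbols when $p \mid q$, and crucially $W_p(E_q) = +1$ whenever $p \nmid q\Delta$. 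Since $q$ is squarefree, $v_p(q) \in \{0,1\}$, so the only data entering at such $p$ is whether $p \mid q$ and the residue of $q/p \bmod$ (a divisor of) $4p$; this is exactly the kind of local condition whose aggregate behavior Desjardins \cite{Desj} packages, yielding periodicity of the full product in $q$ modulo an explicit modulus $N_i$ built from $24$ and the primes of bad reduction.

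The key steps in order: (1) read off a global minimal Weierstrass model for each $E^{(i)}$ and its bad primes from Table \ref{table:1}; (2) for each bad prime $p \in \{2,3\} \cup S$, tabulate $W_p(E_q)$ as a function of the residue class of $q$ modulo a prime-power $p^{k_p}$ (using Rizzo's tables at $2,3$ and Rohrlich elsewhere), noting this is well-defined because the quadratic twist's reduction type at $p$ depends only on $q \bmod p^{k_p}$; (3) observe $W_\infty(E_q) = -1$ always (independent of $q>0$), and $W_p(E_q)=+1$ for all remaining $p$ with $p \nmid q$; (4) handle the primes $p \mid q$ with $p \notin S$ via Rohrlich's formula, which contributes a factor depending only on $p \bmod 4$ (and on the reduction data of $E$, fixed) — this is precisely where Desjardins' result is invoked to convert "a product over the prime divisors of $q$" into "a condition on $q$ modulo $N_i$"; (5) set $N_i = \operatorname{lcm}$ of all the local moduli $p^{k_p}$ together with the modulus coming from step (4), and conclude $q \mapsto W(E^{(i)}_q)$ is periodic with period $N_i$ on squarefree $q \in \mathbb{N}$. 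The argument for $q \mapsto W(E^{(i)}_{-q})$ is identical after replacing the sign, which only alters $W_\infty$ and the $p=2$ and $p\mid q$ factors in a way already covered by the same tables. The second sentence of the theorem is then immediate: under the Parity Conjecture, $\operatorname{Rank}(E^{(i)}_q) \bmod 2 = \frac{1-W(E^{(i)}_q)}{2}$ depends only on $W(E^{(i)}_q)$, hence inherits the same period $N_i$.

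The main obstacle I anticipate is step (4) together with its interaction with squarefreeness: the naive product $\prod_{p \mid q} W_p(E_q)$ is not visibly a function of $q \bmod N$ for any fixed $N$, since the set of prime divisors of $q$ is unbounded. The resolution — and the reason \cite{Desj} is cited rather than a bare appeal to Rohrlich — is that for $p \nmid \Delta$ the local root number of the twist at $p \mid q$ is $\left(\frac{-1}{p}\right)$ or a similar Legendre symbol that, when multiplied across all such $p$, telescopes via quadratic reciprocity into a Jacobi symbol $\left(\frac{\pm 1}{q'}\right)$ or $\left(\frac{\Delta}{q'}\right)$ in the relevant part $q'$ of $q$, which *is* determined by $q$ modulo a fixed power of $2$ and by $q$ modulo primes of $\Delta$. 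Making this telescoping precise and bookkeeping-clean for all eight curves simultaneously is the technical heart; once it is done for one $E^{(i)}$ the rest is a mechanical application of the same tables, and the explicit values of $N_i$ can be recorded in Table \ref{table:2}.
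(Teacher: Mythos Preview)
Your proposal is correct and follows essentially the same route as the paper. The paper quotes Desjardins' explicit formula
\[
W(E_t)=-W_2(E_t)\cdot W_3(E_t)\cdot \left(\frac{-1}{|t_{(6\Delta)}|}\right)\cdot \prod_{p\mid\Delta_{(6)}}W_p(E_t),
\]
which already packages your ``telescoping'' step (4) into the single Jacobi-symbol factor, and then carries out your steps (1)--(3) and (5) in full detail for $E^{(6)}$ (Theorem~\ref{thm:07}), deferring the remaining seven curves to Tables~\ref{table:2} and~\ref{table:3}; the sign change for negative $q$ is handled exactly as you suggest, via the Jacobi symbol (Theorem~\ref{thm:07}\,b)).
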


Each point $Q_i$ in Table \ref{table:1} leads to a different polynomial $P_{Q_i}(u).$ The period $N_i$ will depend on the periods of the local root numbers $W_p\left(E^{(i)}_{q}\right)$ with respect to $q,$ for each fixed prime $p$ dividing the conductor of $E^{(i)}.$ We will explicitly calculate $N_i$ for each curve $E^{(i)}$ using \cite{Desj}, with the help of tables in \cite{Halb} and  \cite{Rizzo}.

Combining results from all curves $E^{(i)}$, with the assumption of the Parity Conjecture, we prove the following theorem.

\begin{theorem}\label{thm:03}
Assuming the Parity Conjecture the following holds:

\begin{enumerate}[label=(\alph*)]
\item For each squarefree $q\in \mathbb{N}$ in at least $295026$ residue classes $\hspace{-5pt}\mod 394680$ there exist infinitely many rational $D(q)$-quintuples. 

\item For each squarefree $q\in -\mathbb{N}$ in at least $295435$ residue classes $\hspace{-5pt}\mod 394680$ there exist infinitely many rational $D(q)$-quintuples.
\end{enumerate}
\end{theorem}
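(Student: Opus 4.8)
The plan is to prove Theorem~\ref{thm:03} by combining, over the eight points $Q_i$, the root-number information of Theorem~\ref{thm:02} with the Parity Conjecture, and then counting residue classes. The logical core is short: if $q\in\mathbb{Z}$ is squarefree and $W(E^{(i)}_{q})=-1$ for some $i$, then Parity gives $\operatorname{rank}E^{(i)}_{q}(\Q)>0$, so (as recalled around~(\ref{eq04})) the equation $P_{Q_i}(u)=qs^2$ has infinitely many rational solutions $(u_1,s_1)$ with $s_1\neq0$, hence $q$ admits infinitely many rational $D(q)$-quintuples. So the task is to identify, for $q\in\mathbb{N}$ (resp.\ $q\in-\mathbb{N}$), as many residue classes as possible in which at least one of $W(E^{(i)}_{q})$, $i=1,\dots,8$ (resp.\ $W(E^{(i)}_{-q})$), equals $-1$.

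First I would make Theorem~\ref{thm:02} quantitative for each $i$: compute the period $N_i$ and the sets
\[
S_i^{+}=\{\,a\bmod N_i:\ W(E^{(i)}_{q})=-1\ \text{for all squarefree }q\equiv a\,\},\qquad
S_i^{-}=\{\,a\bmod N_i:\ W(E^{(i)}_{-q})=-1\ \text{for all squarefree }q\equiv a\,\},
\]
which by Theorem~\ref{thm:02} are well defined. Following Desjardins, $W(E^{(i)}_{\pm q})$ is the product of the archimedean factor (a fixed sign) and the local factors $W_p(E^{(i)}_{\pm q})$ over the primes $p$ dividing the conductor of $E^{(i)}$; for $p\neq2,3$ Rohrlich's formula expresses $W_p$ in terms of the reduction type of the twist, which depends only on $q$ modulo a bounded power of $p$, and for $p\in\{2,3\}$ one puts a minimal model of the twist into the Halberstadt/Rizzo tables. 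Collecting the periods of $q\mapsto W_p(E^{(i)}_{\pm q})$ over these $p$ yields $N_i$ explicitly, and evaluating the product on a set of representatives yields the lists $S_i^{\pm}$; the curve $E^{(6)}$, singled out in the text, serves as the test case.

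Second, set $N=\operatorname{lcm}(N_1,\dots,N_8)$; the computation above gives $N=394680$. Lift each $S_i^{\pm}$ to a union of classes modulo $N$ and form $\mathcal{S}^{\pm}=\bigcup_{i=1}^{8}S_i^{\pm}$. For a squarefree $q\in\mathbb{N}$ with $q\bmod N\in\mathcal{S}^{+}$, pick $i$ with $W(E^{(i)}_{q})=-1$; then, as above, $P_{Q_i}(u)=qs^2$ has infinitely many rational solutions $(u_1,s_1)$, and for all but finitely many of them the set $\{A(u_1)/s_1,B(u_1)/s_1,C(u_1)/s_1,D(u_1)/s_1,x^2(u_1)/s_1\}$ is a genuine rational $D(q)$-quintuple: the conditions ``two entries coincide'' and ``an entry vanishes'' cut out a finite subset of the $u$-line because each is a non-identically-zero algebraic condition on $u$ (the $D(q(u))$-quintuple attached to $Q_i$ is a quintuple over $\Q(u)$, hence non-degenerate there). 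Since the assignment $u_1\mapsto$ (quintuple) is non-constant, it has finite fibres, so these quintuples are pairwise distinct for infinitely many $u_1$. Thus every such $q$ has infinitely many rational $D(q)$-quintuples, and symmetrically for squarefree $q\in-\mathbb{N}$ via $\mathcal{S}^{-}$. It remains to count $\mathcal{S}^{+}$ and $\mathcal{S}^{-}$ among the residue classes modulo $394680$ that are represented by a squarefree integer; this is a finite inclusion--exclusion over the eight sets, producing the bounds $295026$ and $295435$.

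The conceptual steps above are routine once the model $E^{(i)}$ and the point $Q_i$ are in hand, so the bulk of the work, and the step most prone to error, is the explicit determination of the $N_i$ and $S_i^{\pm}$ and the final class count: the $2$- and $3$-adic root-number analysis for the quartic (or cubic) twists of eight curves is long and case-split, and assembling $\mathcal{S}^{+}$, $\mathcal{S}^{-}$ modulo a six-digit modulus and counting them must be done carefully --- prime by prime, and ultimately verified by machine --- to get the overlaps among the eight families exactly right.
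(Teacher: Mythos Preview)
Your proposal is correct and follows essentially the same route as the paper: compute the periods $N_i$ and the sets of residue classes with root number $-1$ for each $E^{(i)}$ (the content of Theorem~\ref{thm:07} and its analogues), lift to the common modulus $\operatorname{lcm}(N_1,\dots,N_8)=394680$, take the union, and then invoke the positive-rank-implies-infinitely-many-quintuples argument (which the paper isolates as Theorem~\ref{thm:09} and which you sketch inline via finite-fibre and non-degeneracy considerations). The only substantive difference is packaging: the paper separates the ``positive rank $\Rightarrow$ infinitely many $D(q)$-quintuples'' step into its own theorem with an explicit quintuple-by-quintuple verification, whereas you fold it into the main argument.
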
 
\begin{remark}
There are $296010$ residue classes $\hspace{-5pt}\mod 394680$ which contain squarefree integers. Theorem \ref{thm:03} shows that we cover more than $99.5\%$ of classes $\hspace{-5pt}\mod 394680.$ We conjecture that Theorem \ref{thm:03}  holds for all squarefree $q\in \mathbb{Z},$ that is for all $q\in \Q,$ but are unable to prove it using this method. 
\end{remark}

\section{Initial grunt work, constructing quintuples}\label{sec:02}

Following Dujella \cite{Euler}, we wish to find $D(q)$-quintuples of the form $\lbrace A, B, C, D, x^2\rbrace$ with\\ $q=\alpha\cdot x^2.$ Dujella started from the $D(q)$-pair $\lbrace B,C \rbrace$, with $BC+\alpha x^2=k^2.$ The numbers $A=B+C-2k$ and $D=B+C+2k$ both extend the pair $\lbrace B,C \rbrace$ to a regular $D(q)$-triple. The quadruple $\lbrace A,B,C,D \rbrace$ is an almost rational $D(q)$-quadruple, missing the condition $AD+\alpha x^2=\square.$ To obtain a rational $D(q)$-quintuple $\lbrace A, B, C, D, x^2\rbrace$ we also need to satisfy that $Y\cdot x^2+\alpha x^2=(Y+\alpha)x^2=\square,$ for $Y=A,B,C$ and $D.$

\begin{proposition}
Let $\lbrace A, B, C, D, x^2 \rbrace$ be a rational $D(\alpha x^2)$-quintuple with the properties
\begin{equation}\label{eq05}
A+\alpha=a^2,B+\alpha=b^2,C+\alpha=c^2,D+\alpha=d^2,
\end{equation}
\begin{equation}\label{eq06}
 BC+\alpha x^2=k^2,\quad  A=B+C-2k,\quad  D=B+C+2k.
\end{equation}

If we denote $p=\frac{d+a}{2}, r=\frac{d-a}{2}$ then
$$b^2=p^2+r^2-x^2+\frac{(p^2-x^2)(r^2-x^2)}{p^2+r^2-c^2-x^2}.$$
\end{proposition}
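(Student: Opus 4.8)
The plan is to eliminate variables systematically using the defining relations, treating the five ``square'' conditions (\ref{eq05}) together with the triple conditions (\ref{eq06}) as a system, and to express everything in terms of the symmetric combinations $p=\frac{d+a}{2}$ and $r=\frac{d-a}{2}$. First I would record the immediate consequences $a=p-r$, $d=p+r$, so that $A+\alpha=(p-r)^2$ and $D+\alpha=(p+r)^2$, hence $D-A=d^2-a^2=4pr$. On the other hand the triple relations in (\ref{eq06}) give $D-A=4k$, so $k=pr$. Similarly $A+D=2(B+C)$ from (\ref{eq06}), and $A+D=a^2+d^2=2(p^2+r^2)$, which yields $B+C=p^2+r^2$. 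This already pins down $B+C$ and $k$ purely in terms of $p,r$, without yet using $b,c,x$.

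Next I would bring in the remaining square conditions. From (\ref{eq05}), $B=b^2-\alpha$ and $C=c^2-\alpha$, so $B+C=b^2+c^2-2\alpha$, and comparing with $B+C=p^2+r^2$ gives $\alpha=\frac{b^2+c^2-p^2-r^2}{2}$; this expresses $\alpha$ in terms of $b,c,p,r$. Then I would substitute into the one relation not yet used, namely $BC+\alpha x^2=k^2$ from (\ref{eq06}). Writing $BC=(b^2-\alpha)(c^2-\alpha)$ and $k^2=p^2r^2$, this becomes
\[
(b^2-\alpha)(c^2-\alpha)+\alpha x^2=p^2r^2,
\]
a single equation linking $b^2,c^2,x^2,\alpha,p,r$. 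Expanding, $b^2c^2-\alpha(b^2+c^2)+\alpha^2+\alpha x^2=p^2r^2$. Now I would substitute $b^2+c^2=2\alpha+p^2+r^2$ (the rearranged form of the $\alpha$-expression) to replace $b^2+c^2$, obtaining $b^2c^2-\alpha(2\alpha+p^2+r^2)+\alpha^2+\alpha x^2=p^2r^2$, i.e. $b^2c^2-\alpha^2-\alpha(p^2+r^2-x^2)=p^2r^2$.

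The final step is to solve this for $b^2$. Using $\alpha=\frac{b^2+c^2-p^2-r^2}{2}$ once more, the equation becomes a relation in $b^2$ with the single unknown $b^2$ appearing through $\alpha$; after clearing the factor of $2$ and collecting, one solves the resulting linear equation for $b^2$ (the $b^4$ terms from $b^2c^2$ and from $\alpha^2$ should cancel, since $\alpha^2$ contributes $\tfrac14 b^4$ and — if I have set things up correctly — the combination is arranged so that only $b^2$ survives after using $\alpha c^2$ and $\alpha\cdot(\text{stuff})$ terms). The expected answer, matching the claimed identity, is
\[
b^2=p^2+r^2-x^2+\frac{(p^2-x^2)(r^2-x^2)}{p^2+r^2-c^2-x^2}.
\]
I expect the main obstacle to be purely bookkeeping: verifying that the higher-degree terms in $b$ genuinely cancel so that the equation is linear in $b^2$, and then recognizing that the resulting rational expression factors as $(p^2-x^2)(r^2-x^2)$ over $(p^2+r^2-c^2-x^2)$ rather than remaining an opaque quotient of quartics. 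A clean way to see the factorization is to substitute $x=p$ (resp. $x=r$) at the end and check the numerator vanishes, confirming $(p^2-x^2)$ and $(r^2-x^2)$ divide it; degree and leading-coefficient counts then force the stated form.
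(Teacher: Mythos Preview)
Your overall strategy is sound, but there is a slip that derails the algebra. You write $A+D=a^2+d^2$, but (\ref{eq05}) gives $A=a^2-\alpha$ and $D=d^2-\alpha$, so in fact $A+D=a^2+d^2-2\alpha=2(p^2+r^2)-2\alpha$. Combined with $A+D=2(B+C)$ this yields $B+C=p^2+r^2-\alpha$, hence $b^2+c^2=p^2+r^2+\alpha$ (the paper's equation (\ref{eq08})) and $\alpha=b^2+c^2-p^2-r^2$, not half of this. With the corrected $\alpha$, substituting $b^2+c^2=\alpha+p^2+r^2$ into $b^2c^2-\alpha(b^2+c^2)+\alpha^2+\alpha x^2=p^2r^2$ makes the $\alpha^2$ terms cancel exactly, leaving $b^2c^2=p^2r^2+\alpha(p^2+r^2-x^2)$. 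Now writing $\alpha=b^2+c^2-p^2-r^2$ gives an equation genuinely linear in $b^2$, and solving it produces the claimed identity (with denominator $c^2+x^2-p^2-r^2$, matching the form used in Proposition~\ref{prop:6}). With your erroneous factor of $2$, the $\alpha^2$ term survives as $-\tfrac14 b^4$ and the equation is \emph{not} linear in $b^2$, which is precisely the obstacle you worried about.

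Once corrected, your route is actually more direct than the paper's. The paper also reaches $b^2+c^2=p^2+r^2+\alpha$ and $b^2c^2=p^2r^2+\alpha(p^2+r^2-x^2)$, but then forms $(b^2-c^2)^2=(b^2+c^2)^2-4b^2c^2$, recognizes the result as a difference of two squares, introduces an auxiliary parameter $v$ from the factorization, and recovers $b^2$, $c^2$, $\alpha$ separately in terms of $v$ before eliminating $v$ via the expression for $c^2$. Your approach bypasses $v$ entirely by solving for $b^2$ at once; the paper's detour buys the side formula $\alpha=\tfrac{1}{v}(p^2-x^2+v)(r^2-x^2+v)$, which is what yields the closed form for $\alpha$ used in Proposition~\ref{prop:6}.
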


\begin{proof}

Subtracting the two rightmost equations in (\ref{eq06}), we have $$4k=D-A=(D+\alpha)-(A+\alpha)=d^2-a^2=(d-a)(d+a)=2r\cdot 2p.$$ It is easy to see that
\begin{equation}\label{eq07}
k=pr,\quad a=p-r,\quad d=p+r.
\end{equation}

The second equation from (\ref{eq06}), using (\ref{eq05}) and (\ref{eq07}), gives us
\begin{equation}\label{eq08}
(a^2-\alpha)=(b^2-\alpha)+(c^2-\alpha)-2k\quad\xRightarrow{(\ref{eq07})}\quad b^2+c^2=p^2+r^2+\alpha
\end{equation}
The first equation in (\ref{eq06}) gives us
$$k^2=(b^2-\alpha)(c^2-\alpha)+\alpha x^2.$$
Substituting $k=pr$ and manipulating using (\ref{eq08}), we obtain
$$4b^2c^2=4\cdot(p^2r^2+\alpha(p^2+r^2)-\alpha x^2).$$ Using the previous equality and (\ref{eq08}), we have
$$(b^2-c^2)^2=(b^2+c^2)^2-4b^2c^2=(p^2+r^2+\alpha)^2-4\cdot(p^2r^2+\alpha(p^2+r^2)-\alpha x^2).$$
Some more manipulations lead to
$$4(p^2-x^2)(r^2-x^2)=(\alpha-(p^2-x^2+r^2-x^2))^2-(b^2-c^2)^2.$$

The right hand side of the last equation is a difference of squares. Denoting
\begin{equation}\label{eq09}
2v=\alpha-(p^2-x^2+r^2-x^2)-(b^2-c^2),
\end{equation}
we have
\begin{equation}\label{eq10}
\frac{2(p^2-x^2)(r^2-x^2)}{v}=\alpha-(p^2-x^2+r^2-x^2)+(b^2-c^2).
\end{equation}
Adding (\ref{eq09}) and (\ref{eq10}), then diving by two, leads to
\begin{equation}\label{eq11}
\alpha=v+\frac{(p^2-x^2)(r^2-x^2)}{v}+(p^2-x^2)+(r^2-x^2)=\frac{1}{v}(p^2-x^2+v)(r^2-x^2+v).
\end{equation}

Subtracting (\ref{eq09}) from (\ref{eq10}) and dividing by two gives us
\begin{equation}\label{eq12}
b^2-c^2=\frac{1}{v}((p^2-x^2)(r^2-x^2)-v^2).
\end{equation}

Eliminating $\alpha$ from (\ref{eq08}) and (\ref{eq11}) gives us
\begin{equation}\label{eq13}
b^2+c^2=p^2+r^2+\frac{1}{v}(p^2-x^2+v)(r^2-x^2+v).
\end{equation}
Lastly, adding (\ref{eq12}) and (\ref{eq13}), as well as subtracting (\ref{eq12}) from (\ref{eq13}) and dividing by two, we have
\begin{equation}\label{eq14}
b^2=p^2+r^2-x^2+\frac{1}{v}(p^2-x^2)(r^2-x^2),
\end{equation}
\begin{equation}\label{eq15}
c^2=p^2+r^2-x^2-v.
\end{equation}
Substituting $v$ into (\ref{eq14}) using (\ref{eq15}), we finish with
\[
b^2=p^2+r^2-x^2+\frac{(p^2-x^2)(r^2-x^2)}{p^2+r^2-c^2-x^2}.
\]

\end{proof}

The previous proposition can be partially reversed.

\begin{proposition}\label{prop:6}
Let $p,r,c,x,b\in \Q$ such that
$$b^2=p^2+r^2-x^2+\frac{(p^2-x^2)(r^2-x^2)}{p^2+r^2-c^2-x^2}.$$
Define
$$a=p-r,\quad d=p+r,\quad k=pr,\quad \alpha=\frac{(c^2-r^2)(c^2-p^2)}{c^2+x^2-p^2-r^2},$$
$$A=a^2-\alpha,\quad B=b^2-\alpha,\quad C=c^2-\alpha,\quad D=d^2-\alpha.$$

Then $\lbrace A, B, C, D, x^2 \rbrace$ is a $D(\alpha x^2)$-quintuple provided that
\begin{enumerate}[label=(\roman*)]
\item no two elements of the quintuple are equal or equal to zero,
\item $\alpha$ is not equal to zero,
\item $AD+\alpha x^2=\square.$
\end{enumerate} 
\end{proposition}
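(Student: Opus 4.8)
The plan is to verify directly that each of the ten quantities $YZ+\alpha x^2$, with $Y,Z$ distinct elements of $\{A,B,C,D,x^2\}$, is a square; conditions (i) and (ii) then finish the argument, since (i) yields five distinct nonzero elements and, as $x^2$ is itself one of those elements, forces $x^2\neq 0$, so that $q=\alpha x^2\neq 0$ by (ii). Five of the ten products require no work: $A+\alpha=a^2$, $B+\alpha=b^2$, $C+\alpha=c^2$, $D+\alpha=d^2$ hold by the definitions of $A,B,C,D$, so $Yx^2+\alpha x^2=(Y+\alpha)x^2$ is a square for each $Y\in\{A,B,C,D\}$ (four products), and $AD+\alpha x^2=\square$ is hypothesis (iii). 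The content is therefore confined to $AB+\alpha x^2$, $AC+\alpha x^2$, $BC+\alpha x^2$, $BD+\alpha x^2$, and $CD+\alpha x^2$.

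The engine is the identity (\ref{eq08}), i.e. $b^2+c^2=p^2+r^2+\alpha$, which is essentially the content of the hypothesis on $b^2$ read through the definition of $\alpha$. I would prove it first: substituting the hypothesis for $b^2$ and the formula for $\alpha$ and clearing the common denominator, (\ref{eq08}) becomes the polynomial identity
\[
(c^2-x^2)(c^2+x^2-p^2-r^2)+(p^2-x^2)(r^2-x^2)=(c^2-p^2)(c^2-r^2),
\]
checked by expanding. Granting (\ref{eq08}), the relations in (\ref{eq06}) follow: with $k=pr$ one has $B+C-2k=b^2+c^2-2\alpha-2pr$, while $A=(p-r)^2-\alpha=p^2+r^2-2pr-\alpha$, and the equality $A=B+C-2k$ is then equivalent to (\ref{eq08}); the same reasoning gives $D=B+C+2k$. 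It remains to check $BC+\alpha x^2=k^2$: expanding $BC=(b^2-\alpha)(c^2-\alpha)$ and using (\ref{eq08}) once gives $BC+\alpha x^2=b^2c^2-\alpha(p^2+r^2-x^2)$, and a further short manipulation with the definition of $\alpha$ and (\ref{eq08}) --- equivalently, the identity $b^2c^2=p^2r^2+\alpha(p^2+r^2-x^2)$ --- reduces this to $p^2r^2=k^2$.

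Once $A=B+C-2k$, $D=B+C+2k$, and $BC+\alpha x^2=k^2$ are in hand, the five remaining products telescope exactly as in the theory of regular $D(q)$-quadruples:
\[
AB+\alpha x^2=(B-k)^2,\quad AC+\alpha x^2=(C-k)^2,\quad BD+\alpha x^2=(B+k)^2,\quad CD+\alpha x^2=(C+k)^2,
\]
together with $BC+\alpha x^2=(pr)^2$. Combined with the four trivial products and with (iii), all ten are squares, so, given (i) and (ii), $\{A,B,C,D,x^2\}$ is a $D(\alpha x^2)$-quintuple. I do not expect a genuinely hard step; the only thing needing care is the bookkeeping around (\ref{eq08}). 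The real point to highlight is why the converse of the preceding proposition can only be partial: a direct computation gives $AD+\alpha x^2=(B+C)^2-4k^2+\alpha x^2=(b^2-c^2)^2-3\alpha x^2$, which is not forced to be a square by the other relations and so must be imposed --- and imposing it is exactly the condition that gives rise to the curve $\mathcal{C}$ of Section \ref{sec:03}.
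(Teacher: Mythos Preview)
Your approach is correct and considerably more structured than the paper's own proof, which is the single sentence ``One can check by calculation that the numbers $AB+\alpha x^2,\ldots$ are squares.'' You actually explain \emph{why} they are squares: from the hypothesis you recover the regular--triple relations $A=B+C-2k$, $D=B+C+2k$, $BC+\alpha x^2=k^2$ via (\ref{eq08}), after which the remaining four products collapse to $(B\pm k)^2$ and $(C\pm k)^2$. The verification that $b^2c^2=p^2r^2+\alpha(p^2+r^2-x^2)$, and hence $BC+\alpha x^2=k^2$, is correct, as is your closing observation that $AD+\alpha x^2=(b^2-c^2)^2-3\alpha x^2$, which nicely isolates why condition (iii) is genuinely extra.

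One caution about signs. The hypothesis as printed in the proposition has denominator $p^2+r^2-c^2-x^2$, whereas the same formula at the start of Section~\ref{sec:03} has $c^2+x^2-p^2-r^2$; tracing the derivation in the preceding proposition shows that (\ref{eq15}) should read $c^2=p^2+r^2-x^2+v$, so the Section~\ref{sec:03} sign is the intended one. Your polynomial identity
\[
(c^2-x^2)(c^2+x^2-p^2-r^2)+(p^2-x^2)(r^2-x^2)=(c^2-p^2)(c^2-r^2)
\]
is true, and it is exactly what (\ref{eq08}) reduces to after clearing denominators \emph{with the Section~\ref{sec:03} sign}. With the sign as literally printed in the proposition, the reduction would instead yield the same identity with a minus in place of the plus, which is false---and then (\ref{eq08}) itself fails (e.g.\ $p=5,r=2,c=1,x=3$). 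So either you silently corrected the typo or a compensating slip landed you on the right identity; in either case your argument is sound for the intended statement.
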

\begin{proof}
One can check by calculation that the numbers $AB+\alpha x^2, AC+\alpha x^2, BC+\alpha x^2, BD+\alpha x^2, CD+\alpha x^2, Ax^2+\alpha x^2, Bx^2+\alpha x^2, Cx^2+\alpha x^2, Dx^2+\alpha x^2$ are squares. This proves the proposition.
\end{proof}

We now focus on rationality, and handle degeneracy issues in the proof of Theorem \ref{thm:09}.
\section{Reducing the number of parameters}\label{sec:03}

To find rational $D(q)$-quintuples we need rationals solutions of the pair of equations

\[
b^2=p^2+r^2-x^2+\frac{(p^2-x^2)(r^2-x^2)}{c^2+x^2-p^2-r^2}=p^2+r^2+\alpha-c^2,
\]

\[
z^2=AD+\alpha x^2=(p^2-r^2)^2+\alpha(x^2-2(p^2+r^2)+\alpha),
\]
where $\alpha$ is defined as
\[
\alpha=\frac{(c^2-r^2)(c^2-p^2)}{c^2+x^2-p^2-r^2}.
\]

We notice that $\alpha, b^2$ and $z^2$ are equal to homogeneous rational functions in $p,r,c,x$ so we start by setting $r=1.$ After that, the expressions for $\alpha, b^2, z^2$ simplify to
\[
\alpha=\frac{(c^2-1)(c^2-p^2)}{c^2+x^2-p^2-1},
\]
\begin{equation}\label{eq16}
b^2=p^2+1+\alpha-c^2,
\end{equation}
\begin{equation}\label{eq17}
z^2=(p^2-1)^2+\alpha(x^2-2(p^2+1)+\alpha).
\end{equation}
We would like to specialize one of the parameters $c,p,x$ using the other two, since we do not know how to completely solve the pair of equations (\ref{eq16}), (\ref{eq17}). This specialization should keep the squarefree part of $\alpha$ as simple as possible. 

Define the surfaces $S_1$ and $S_2$ over $\Q$ by the following equations:
$$S_1\colon (c^2-1)(c^2-p^2)=0,\quad S_2\colon c^2+x^2-p^2-1=0,$$
which are the zero sets of the numerator and denominator of $\alpha$. The surface $S_1$ is the union of the four planes $c=\pm p$ and $c=\pm 1,$  while $S_2$ is a hyperboloid and their intersection is the union of the eight lines
\[
l_{1,2,3,4}\colon c=\pm 1, x=\pm p, \quad l_{5,6,7,8} \colon c=\pm p, x=\pm 1.
\] 
A heuristic derived from Section 3, Lemma 5 in \cite{DuKaPe} tells us we could find a good specialization if we find a low degree surface in variables $c,p,x$ which intersects both $S_1$ and $S_2$ at exactly the lines $l_i.$ The logical first choice are planes which contain two lines $l_i.$ Such planes have equations $x=\pm 1 \pm c \pm p$, so we set $x=c+p+1$ (changes of signs do not change anything relevant). In practice, the author came across this specialization when examining the family of $D(q(u))$-quintuples (\ref{eq26}), found by Dujella. After the specialization, the equations for $\alpha, b^2, z^2$ are
\begin{align*}
\alpha&=\frac{1}{2}(c-p)(c-1),\\ \numberthis \label{eq18}
b^2&=p^2+\frac{1-c}{2}p-\frac{1}{2}(c^2+c)+1,\\ \label{eq19} \numberthis
z^2&=p^4 + \frac{c-1}{2}p^3 - \frac{5c^2 + 3}{4}p^2 + \frac{c^2 - 1}{2}p +
    \frac{3c^4-5c^2+2c+4}{4}.
\end{align*}

We further reduce the number of parameters. Equation (\ref{eq18}) is a conic in variables $b,p$ with a rational point $(1,c).$ Using standard technique, rational points on (\ref{eq18}) can be parametrized by 
\begin{equation}\label{eq20}
p=\frac{u^2c+c/2+1/2-2u}{u^2-1},\quad b=\frac{u^2 - 3uc/2 - u/2 + 1}{u^2 - 1}, \quad u\in \QQ.
\end{equation}

Plugging the expression for $p$ from (\ref{eq20}) into (\ref{eq19}) makes the right hand side a polynomial of degree four in variable $c$ with coefficients in $\QQ(u).$ Multiplying both sides by $\displaystyle \left(\frac{(u^2-1)^2}{u^2-1/4}\right)^2$ leads to
\begin{equation}\label{eq21}
\mathcal{C}\colon z_1^2=z^2\cdot \left(\frac{(u^2-1)^2}{u^2-1/4}\right)^2=f_4(u)c^4+f_3(u)c^3+f_2(u)c^2+f_1(u)c+f_0(u),
\end{equation}
where $f_i(u)$ are rational functions in variable $u$ given by
\begin{align*}
f_4(u)&=u^4+u^2+7, \\
f_3(u)&=-3\cdot\frac{(u^3+3u-1)(2u^2+1)}{u^2-1/4},\\ \numberthis \label{eq22}
f_2(u)&=\frac{-16u^8 + 16u^7 + 242u^6 - 76u^5 + 199u^4 - 166u^3 + 47u^2 + 10u - 13}{8\cdot(u^2-1/4)^2},\\ 
f_1(u)&=3\cdot\frac{(u^3+3u^2+1/2)(u^4 - 11/2u^3 + 4u^2 - 3/2u + 1/2)}{(u^2-1/4)^2},\\ 
f_0(u)&=\frac{16u^8 + 16u^7 - 116u^6 + 40u^5 + 409u^4 - 308u^3 + 25u^2 - 20u + 19}{16(u^2-1/4)^2}.
\end{align*}
The curve $\mathcal{C}$, defined by (\ref{eq21}), is birationally equivalent to an elliptic curve over $\QQ(u)$ since it has a rational point $\displaystyle \left(c,z_1\right)=\left(1,\frac{4u(u-1)^2}{u^2-1/4}\right).$ It is birational to the curve in Weierstrass form
\begin{align*}
E\colon y^2=&x^3-27\cdot(256u^8 + 64u^7 - 1280u^6 + 1216u^5 + 3265u^4 - 2372u^3 + 310u^2 - 332u + 169)x \\ 
&+ 54\cdot(4096u^{12} + 1536u^{11} - 30624u^{10} - 18400u^9 + 74448u^8 + 125568u^7 -
    59313u^6\\ 
    & - 165978u^5 + 154773u^4 - 40360u^3 + 5187u^2 - 6474u + 2197).
\end{align*}

The points
\begin{align*}
S_1&=[48u^4 + 168u^3 - 9u^2 - 138u + 39 , -1944u^5 - 1944u^4 + 4374u^3 + 486u^2 - 972u],\\ 
S_2&=\Bigg[\frac{48u^6 + 588u^5 + 753u^4 - 1014u^3 + 24u^2 - 6u + 39}{u^2 + 2u + 1} ,\\\numberthis \label{eq23}
  &\frac{-5832u^8 - 25596u^7 - 6156u^6 + 48438u^5 - 8100u^4 + 324u^3 - 3240u^2 + 162u}{u^3 + 3u^2 + 3u + 1}\Bigg],\\ 
S_3&=\Bigg[\frac{48u^6 + 204u^5 - 855u^4 + 78u^3 + 2028u^2 - 1098u + 27}{u^2 - 6u + 9},\\
&\frac{-5832u^8 + 21060u^7 + 972u^6 - 94446u^5 + 102384u^4 + 34020u^3 -
    67392u^2 + 486u + 8748}{u^3 - 9u^2 + 27u - 27}\Bigg],\\
S_4&=[48u^4 + 492u^3 + 693u^2 - 84u - 69 , -5832u^5 - 19764u^4 - 15228u^3 + 3402u^2 + 2754u - 324],\\
S_5&=\Bigg[\frac{48u^6 + 12u^5 - 291u^4 + 66u^3 + 600u^2 + 66u - 69}{u^2 + 2u + 1} ,\\
 & \frac{-1080u^8 - 2484u^7 + 6480u^6 + 17550u^5 - 1512u^4 - 18468u^3 -
    3348u^2 + 2538u + 324}{u^3 + 3u^2 + 3u + 1}\Bigg]
\end{align*}
are independent points in the Mordell-Weil group $E(\QQ(u)).$ We used Magma \cite{Magma} to prove the independence of the points $S_i$ by checking that the elliptic regulator of these points is nonzero.\hypertarget{EnaPetorku}{}

Each rational point on $E$ determines a rational point $(c(u), z_1(u)) \in \mathcal{C}.$ From (\ref{eq20}) we obtain $p(u)$ and $b(u).$ We set $r(u)=1$ and $x(u)=c(u)+p(u)+1.$ According to Proposition \ref{prop:6}, each $c(u)$ defines a $D(\alpha(u) x(u)^2)$-quintuple $\displaystyle \lbrace A(u), B(u), C(u), D(u), x^2(u) \rbrace,$ unless a degeneracy occurs (two elements of the quintuple might be equal, some element or $\alpha(u)$ might be identically zero). The condition $A(u)D(u)+\alpha(u)x(u)^2\in (\QQ(u))^2$ is satisfied because the pair of rational functions $(c(u), z_1(u))$ satisfies equation (\ref{eq21}). 

For each point on $E$ of the form $\displaystyle \sum_{i=1}^5k_iS_i$ with $k_i\in\lbrace -6,  \dots,  6 \rbrace,$ assuming it defines a $D(\alpha(u)x(u)^2)$-quintuple, we calculate the degree of the polynomial $P(u)$ defined at (\ref{eq03}) using Magma. We did not obtain any polynomials of degree one or two. Every polynomial of degree four turned out to be reducible, some had a rational zero and some were products of two irreducible square polynomials in $\QQ[u].$ Polynomials of degree three and polynomials of degree four with a rational zero, such that the quintuple associated to them is a non degenerate $D(q(u))$-quintuple we call \textit{good} polynomials. 

Each good polynomial $P_0(u)$ defines an elliptic curve by the equation $y^2=P_0(u),$ and every quadratic twist $qy^2=P_0(u)$ of of such a curve is an elliptic curve over $\QQ$ as well (the twists of curves, where $P_0$ is of degree four, have a rational point with $y=0.$)

For any two different good polynomials which define elliptic curves with the same $j$-invariant, there is a $q_0\in \mathbb{Z}$ such that the quadratic $q_0$-twist of one curve is isomorphic over $\QQ$ to the other curve. This is true because the $j$-invariant of all our curves is not equal to $0$ or $1728$ \cite[Chapter X, Prop. 5.4]{Silv}. We only count one representative of each class of polynomials which define elliptic curves with the same $j$-invariant.

The following points on $E$ determine a $D(q(u))$-quintuple such that the polynomial $P(u)$ is good, and all of the associated polynomials $P_{Q_i}(u)$ define elliptic curves $E^{(i)}$ which have different $j$-invariants:
\begin{table}[h!]
\begin{center}
\begin{tabular}{ |c|c| } 
\hline
$Q_i\in E(\QQ(u))$ & $P_{Q_i}(u)$ \\
\hline
$-4S_1 - 2S_2 - 2S_3 + 3S_4 + 5S_5$ & $-1200u^3 + 1645u^2 - 410u - 35$ \\ 
$-4S_1 - S_2 - 2S_3 + 2S_4 + 4S_5$ & $-80u^4+148u^3-65u^2-12u+9$ \\ 
$-3S_1 - S_2 - 2S_3 + S_4 + 4S_5$ & $-28u^4-44u^3+157u^2-106u+21$ \\ 
$-3S_1 - S_2 - S_3 + 2S_4 + 3S_5$ & $112u^4-100u^3-93u^2+92u-11$ \\ 
$-2S_1 - S_2 - 2S_3 + 2S_4 + 4S_5$ & $300u^3-65u^2+16u+1$ \\ 
$-2S_1 - 2S_3 + S_4 + 3S_5$ & $4u^4 - 20u^3 + 13u^2 + 12u$ \\ 
$-S_1 -S_2 - S_3 + S_4 + 3S_5$ & $-40u^3-19u^2+38u+21$ \\ 
$-S_4 + S_5$ & $-144u^3+61u^2+94u-11$ \\ 
\hline
\end{tabular}
\end{center}
\caption{Points $Q_i \in E(\QQ(u))$ with polynomial $P_{Q_i}(u)$ defining $E^{(i)}\colon y^2=P_{Q_i}(u).$}
\label{table:1}
\end{table}

\section{Periodicity of root numbers of twists}

For $E/\QQ$ and $0\neq t\in \mathbb{Z},$ let $E_t$ denote its quadratic twist by $t.$ We also introduce some non-standard notation from Desjardins \cite{Desj}.

Given an integer $\beta\in \mathbb{Z}$ and a prime $p$, let $v_p(\beta)$ denote the greatest exponent of $p$ dividing $\beta.$ By $\beta_{(p)}$ we denote the number such that
\[
\beta=\beta_{(p)}\cdot p^{v_p(\beta)}.
\]
Similarly, if $\displaystyle d=\prod_i p_i^{e_i}$, we define $\beta_{(d)}$ to be the integer such that
\[
\beta=\beta_{(d)}\cdot \prod_i p_i^{v_{p_i}(\beta)}.
\]
Desjardins \cite[Theorem 1.2 b)]{Desj}, proved that the function 
\[
t\mapsto W(E_t)
\]
is periodic on squarefree $t$ of constant sign, assuming $j(E)\neq 0, 1728$. We calculate these periods, as well as give explicit formulae for $W\left(E^{(i)}_t\right)$ for the curves $E^{(i)}$ using \cite{Desj} and tables from \cite{Rizzo}. Note that none of the curves in our calculations have $j$-invariant equal to $0$ or $1728.$

\cite[Theorem 1.2 a)]{Desj} gives an explicit formula for the root number of a twist of an elliptic curve, whose $j$-invariant is not $0,1728$:

\[
W(E_t)=-W_2(E_t)\cdot W_3(E_t)\cdot \left(\frac{-1}{|t_{(6\Delta)}|}\right)\cdot \left(\prod_{p|\Delta_{(6)}}W_p(E_t)\right),
\]
where $\left(\frac{\cdot}{\cdot}\right)$ is the Jacobi symbol.

Each factor in the previous equation is periodic on squarefree $t$ of constant sign. This is a consequence of the properties of the Jacobi symbol, and \cite[Lemma 3.2]{Desj}, which states that the function $t\mapsto W_p(E_t)$ is periodic on squarefree $t,$ for every prime $p$. Moreover, the same lemma proves that for $p\geq 5,$ the period of $W_p(E_t)$ divides $p^2$, and for $p=2$ or $3,$ the period is $p^{\gamma_p},$ for a nonnegative integer $\gamma_p.$ For explicit curves $E$ we can calculate $\gamma_p$ using tables in \cite{Halb} or \cite{Rizzo}.

We now state and prove an expanded version of Theorem \ref{thm:02} describing the curve $E^{(6)}$. A similar version of the following theorem (with similar proofs) can be 
made for every curve in Table \ref{table:1}.
\begin{theorem}\label{thm:07}
The curve $E^{(6)}$ has Weierstrass form $y^2=x^3-24003x+1296702,$ conductor $C=30$ and its discriminant is $\Delta=2^{14}3^{18}5^2.$

a) The periods of the functions $W_2(E^{(6)}_t), W_3(E^{(6)}_t)$ and $ W_5(E^{(6)}_t)$ on squarefree $t$ are $8, 3$ and $5,$ respectively.
The period of the function $\left(\frac{-1}{t_{(30)}}\right)$ on positive squarefree $t$ is $24.$

b) If $t>0, t'<0,$ both $t$ and $t'$ are squarefree and $t \equiv t' \:\:\:\:(\mmod 120),$ then $\left(\frac{-1}{t_{(30)}}\right)=-\left(\frac{-1}{-t'_{(30)}}\right)$. Specially, $W(E^{(6)}_t)=-W(E^{(6)}_{t'}).$

c) The period of $W(E^{(6)}_t)=-W_2(E^{(6)}_t)W_3(E^{(6)}_t)W_5(E^{(6)}_t)\left(\frac{-1}{|t_{(30)}|}\right)$ on squarefree $t$ of constant sign is $120.$

d) Assuming the Parity Conjecture, if $q$ is positive, squarefree and in one of $47$ classes $\hspace{-5pt}\mod 120,$ then the rank of $E^{(6)}_q(\QQ)$ is positive. 

If $q$ is negative, squarefree and in one of $43$ classes $\hspace{-5pt}\mod 120,$ then the rank of $E^{(6)}_q(\QQ)$ is positive. 

\end{theorem}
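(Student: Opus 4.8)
## Proof Plan for Theorem \ref{thm:07}

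The plan is to work through the four parts in order, treating them as a sequence of explicit computations built on the machinery of Desjardins \cite{Desj} and the reduction-type tables of Rizzo \cite{Rizzo} and Halberstadt \cite{Halb}.

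\textbf{Setting up the Weierstrass model.} First I would take the good polynomial $P_{Q_6}(u) = 4u^4 - 20u^3 + 13u^2 + 12u$ from Table \ref{table:1}, which has the rational root $u = 0$, and run the standard descent from a genus-one quartic with a rational point to a Weierstrass model. This should produce $y^2 = x^3 - 24003x + 1296702$; from there I compute $\Delta = -16(4a^3 + 27b^2)$ with $a = -24003$, $b = 1296702$, factor it, and read off $\Delta = 2^{14}3^{18}5^2$ and hence conductor supported on $\{2,3,5\}$. The conductor $C = 30$ follows from determining the reduction type at each of $2, 3, 5$ via Tate's algorithm (or directly from Rizzo's tables, given $v_2(\Delta), v_3(\Delta), v_5(\Delta)$ and the valuations of $c_4, c_6$).

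\textbf{Parts (a)--(c): periods of the local factors.} For part (a) I invoke \cite[Lemma 3.2]{Desj}: for $p = 5$ the period of $t \mapsto W_5(E^{(6)}_t)$ divides $5^2$, and I pin it down to exactly $5$ by evaluating $W_5$ on a few squarefree residues mod $25$ using Rizzo's tables for the twisted curve $E^{(6)}_t$ at $p = 5$ (the twist changes $v_5(\Delta)$ in a controlled way depending on $v_5(t)$). For $p = 2, 3$ the period is $p^{\gamma_p}$; I determine $\gamma_2$ and $\gamma_3$ by checking when the reduction type of $E^{(6)}_t$ stabilizes as $t$ ranges over squarefree classes mod increasing powers of $2$ and $3$ — this should give $\gamma_2 = 3$ (period $8$) and $\gamma_3 = 1$ (period $3$). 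The period of $t \mapsto \left(\frac{-1}{t_{(30)}}\right)$ on positive squarefree $t$ is a pure Jacobi-symbol computation: $t_{(30)}$ strips the $2$, $3$, $5$ parts, and quadratic reciprocity / the supplement for $-1$ gives period $24$ (one needs the $8$ from the prime $2$ and the $3$ from the odd part mod $4$ interacting). For part (b), I compare $\left(\frac{-1}{t_{(30)}}\right)$ and $\left(\frac{-1}{-t'_{(30)}}\right)$ when $t \equiv t' \pmod{120}$ with $t > 0 > t'$: since $-t' > 0$ and $t, -t'$ differ by a sign that flips the $\left(\frac{-1}{\cdot}\right)$-value exactly once, and since by part (a) all the $W_p$ factors ($p = 2, 3, 5$) already have period dividing $120$ and depend only on $t$ mod a power of $p$ and sign, the product formula from \cite[Theorem 1.2 a)]{Desj} yields $W(E^{(6)}_t) = -W(E^{(6)}_{t'})$. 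Part (c) is then immediate: $\mathrm{lcm}(8, 3, 5, 24) = 120$, and one checks the period is not a proper divisor by exhibiting two squarefree $t$ of the same sign in the same class mod $40$ (or mod $60$, etc.) with opposite root number.

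\textbf{Part (d): counting favourable classes.} With the explicit formula $W(E^{(6)}_t) = -W_2 W_3 W_5 \left(\frac{-1}{|t_{(30)}|}\right)$ in hand, I would tabulate, for each of the $120$ residue classes mod $120$ that contain squarefree integers, the value of each factor and multiply; the classes where the product is $-1$ are exactly those where, by the Parity Conjecture, $\mathrm{rank}\, E^{(6)}_q(\QQ)$ is odd, hence positive. Doing this separately for $q > 0$ and $q < 0$ (the sign enters through $\left(\frac{-1}{|t_{(30)}|}\right)$ and through $W_\infty$, already absorbed into the leading $-1$) should yield $47$ favourable classes in the positive case and $43$ in the negative case. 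I expect the main obstacle to be bookkeeping accuracy rather than conceptual difficulty: correctly reading the Rizzo/Halberstadt tables for the \emph{quadratic twists} $E^{(6)}_t$ at $p = 2$ (where the tables are most intricate and the minimality conditions subtle) and making sure the period claims are tight and the final class counts are exhaustive. This is precisely the step that \cite{Desj} is designed to systematize, so I would lean on its explicit formulas and verify a sample of classes against a direct root-number computation in Magma \cite{Magma} as a sanity check.
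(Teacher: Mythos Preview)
Your proposal is correct and follows essentially the same route as the paper: compute each local factor $W_p(E^{(6)}_t)$ for $p=2,3,5$ explicitly via the $(a',b',c')$-vectors and the Rizzo/Halberstadt tables, handle the Jacobi-symbol factor $\left(\frac{-1}{|t_{(30)}|}\right)$ by elementary case analysis on divisibility by $2$ and $3$, and then take the $\mathrm{lcm}$ and tabulate. The only place to be a bit more careful than your sketch is part (b): $t$ and $t'$ are not negatives of each other but merely congruent mod $120$ with opposite signs, so the flip in the Jacobi factor comes from showing $t_{(30)}\equiv t'_{(30)}\pmod 4$ and hence $-t_{(30)}t'_{(30)}\equiv -1\pmod 4$, while the $W_p$ factors for $p=2,3,5$ are genuinely sign-independent.
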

\begin{proof}
a) Assume $t$ is positive. We first prove $\left(\frac{-1}{t_{(30)}}\right)=\left(\frac{-1}{t_{(6)}}\right).$ 

If $5\nmid t,$ then obviously $t_{(30)}=t_{(6)}$ so $\left(\frac{-1}{t_{(30)}}\right)=\left(\frac{-1}{t_{(6)}}\right).$
Assume $t=5t'$ where $5\nmid t'.$ Then
$\left(\frac{-1}{t_{(30)}}\right)=\left(\frac{-1}{t'_{(6)}}\right)=\left(\frac{-1}{t'_{(6)}}\right)\left(\frac{-1}{5}\right)=\left(\frac{-1}{(5t')_{(6)}}\right)=\left(\frac{-1}{t_{(6)}}\right).$

To calculate $\left(\frac{-1}{n}\right)=-1^{(n-1)/2},$ for an odd number $n,$ we only need to know $n \mmod 4.$ Therefore, to prove $\left(\frac{-1}{t_{(6)}}\right)$ is periodic with period $24$ (on squarefree $t$) we check several cases. 

If $t=6t'$ then $t_{(6)}=t'$ since $t$ is squarefree. $\left(\frac{-1}{t'}\right)$ has period $4$ so the total period is $24$.

Cases $t=3t', t=2t'$ and $t=t'$ where in each case $(t',6)=1$ are handled similarly.

24 is the smallest period since $1=\left(\frac{-1}{3_{(6)}}\right)\neq \left(\frac{-1}{(11)_{(6)}}\right)=-1$ and $1=\left(\frac{-1}{2_{(6)}}\right)\neq \left(\frac{-1}{(14)_{(6)}}\right)=-1.$

We can calculate $W_5(E_t)$ using \cite[Prop 3.1]{Desj}. In our case, if $5\nmid t,$ the reduction of $E_t$ at $5$ is of type $I_2,$ while if $5 | t,$ the reduction is of type $I_2^{\ast},$ calculated by Magma\cite{Magma}. We conclude
$$W_5(E_t)=\Bigg\lbrace\begin{array}{c}
\hspace{5pt} 1\hspace{6pt}, \quad 5\mid t \\  \left(\frac{t}{5}\right), \quad 5\nmid t\end{array}.$$
 
 When $p=2$ or $3,$ things get more complicated. According to \cite[Prop 3.1]{Desj}, or \cite[1.1]{Rizzo} we need to find the smallest vector $(a',b',c')$ with nonnegative entries such that
 $$(a',b',c')=(v_p(c_4),v_p(c_6),v_p(\Delta))+k(4,6,12),$$
 for $k\in \Z,$ where $c_4, c_6, \Delta$ are the usual quantities associated to the Weierstrass equation of an elliptic curve.
 
 For $p=3$ and $t=1$, we have $(a',b',c')=(4,6,18)-1(4,6,12)=(0,0,6).$ Twisting by $t\not{\hspace{-4pt}\equiv} \hspace{4pt}0$ \\
 $(\mmod 3)$ does not change $(a',b',c').$ Per \cite[Table II,row 3]{Rizzo} we have that:
 
$t\equiv 1 \:\:(\mmod 3)\Rightarrow (c_6)_{(3)}\equiv 2 \:\:(\mmod 3),$  so $W_3(E_t)=-1.$
 
$t\equiv 2 \:\:(\mmod 3)\Rightarrow (c_6)_{(3)}\equiv 1 \:\:(\mmod 3),$  so $W_3(E_t)=1.$
 
 Twisting by $t\equiv 3,6 \:\:(\mmod 9)$ (note that $t$ is squarefree so it cannot be $\equiv 0 \:\:(\mmod 9)$)  we get in both cases $(a',b',c')=(6,9,24)-1(4,6,12)=(2,3,12).$ Per \cite[Table II,row 13]{Rizzo}, we have $W_3(E_t)=-1.$ In short,
 $$W_3(E_t)=\Bigg\lbrace\begin{array}{c}
 -1,\quad t\equiv 0,1 \:\:\:\:(\mmod 3) \\  \hspace{8pt}1, \hspace{11pt} t\equiv 2 \quad \:\:\:\:(\mmod 3)\end{array}.$$
 
 For $p=2$ and $t=1$, we have $(a',b',c')=(4,6,14)-1(4,6,12)=(0,0,2).$ Twisting by an odd $t$ will not change $(a',b',c')$. Per \cite[Table III, rows 2 and 9]{Rizzo} $W_2(E_t)=1$ if and only if $(c_6)_{(2)} \equiv 3$ $(\mmod 8),$ which happens exactly when $t\equiv 1 \:\:(\mmod 8).$
 
 Twisting by $t\equiv 2,6 \:\:(\mmod 8)$ makes $(a',b',c')=(6,9,20)-1(4,6,12)=(2,3,8).$ Per \cite[Table III, row 17]{Rizzo} we have:
 
 $t\equiv 2 \:\:(\mmod 8)\Rightarrow (c_6)_{(2)}\equiv 3 \:\:(\mmod 4),$  so $W_2(E_t)=1.$
 
$t\equiv 6\:\:(\mmod 8)\Rightarrow (c_6)_{(2)}\equiv 1 \:\:(\mmod 4),$  so $W_2(E_t)=-1.$
For $p=2$ we have
\[
W_2(E_t)=\Bigg\lbrace\begin{array}{c}
 \hspace{8pt}1,\quad t\equiv 1,2 \qquad \:\:\:\:(\mmod 8) \\   -1,\quad  t\equiv 3,5,6,7 \:\:\:\:(\mmod 8)\end{array}.
 \]
This concludes the proof of part a).

b) $\displaystyle \left(\frac{-1}{t_{(30)}}\right)=-\left(\frac{-1}{-t'_{(30)}}\right) \Leftrightarrow \left(\frac{-1}{t_{(30)}}\right)\cdot\left(\frac{-1}{-t'_{(30)}}\right)=-1 \Leftrightarrow \left(\frac{-1}{-t_{(30)}t'_{(30)}}\right)=-1.$ 

$t_{(30)}$ and $t'_{(30)}$ are odd numbers and since $t \equiv t' \:\:(\mmod 120)$ and $t,t'$ are not divisible by $4,$ because they are squarefree, then it must be true that $t_{(30)} \equiv t'_{(30)} \:\:(\mmod 4),$ and $-t_{(30)}t'_{(30)} \equiv -\left(t'_{(30)}\right)^2 \:\:(\mmod 4).$

Now, $\left(\frac{-1}{-t_{(30)}t'_{(30)}}\right)=\left(\frac{-1}{4k-\left(t'_{(30)}\right)^2}\right)=-1,$ since $-1$ is not a quadratic residue modulo a number of the form $4k'-1.$ $4k-\left(t'_{(30)}\right)^2\equiv -1 \:\:(\mmod 4)$ because $\left(t'_{(30)}\right)^2$ is an odd square.

c) Follows easily from a) and b).

d) If $t \mmod 120$ is $6, 7, 9, 11, 14, 15, 22, 26, 30, 35, 39, 41, 43, 50, 51, 53, 54, 58, 59, 61, 65, 66, 67, 71,$ $73, 74, 75, 77, 81, 82, 85, 86, 89, 90, 93, 95, 97, 99, 103, 105, 109, 110, 111, 114, 117, 118$ or $119$ and $t$ is positive and squarefree, then $W(E_t)=-1.$ Assuming the Parity Conjecture, the rank of $E_t(\Q)$ is odd, therefore positive.
 
\noindent If $t \mmod 120$ is $1, 2, 3, 5, 10, 13, 17, 18, 19, 21, 23, 25, 27, 29, 31, 33, 34, 37, 38, 42, 45, 46, 47, 49, 55,57,$ $ 62, 63, 69, 70, 78, 79, 83, 87, 91, 94, 98, 101, 102, 106, 107, 113$ or $115$ and $t$ is negative and squarefree, then $W(E_t)=-1.$ Assuming the Parity Conjecture, the rank of $E_t(\Q)$ is odd, therefore positive.
\end{proof}

\begin{remark}
For every curve $E^{(i)},$ the period of $W(E^{(i)})$ is divisible by $8.$ We use this to prove $W(E_{t}^{(i)})=-W(E_{t'}^{(i)}),$ for each curve $E^{(i)}.$
Each curve $E^{(i)}$ has a version of Theorem \ref{thm:07} similar to the one stated. We list important facts in Tables \ref{table:2} and \ref{table:3}.
\end{remark}
\begin{table}[h!]
\begin{center}
\begin{tabular}{ |c|c|c|c|c| } 
 \hline
 $E^{(i)}$ & Weierstrass form & $C$ & $\Delta$ & $N_i=\text{period of }W(E^{(i)}_t)$\\ 
 \hline
 $E^{(1)}$ & $y^2=x^3-33210675x+6964980750$ & $2\cdot3\cdot5^2\cdot11$ & $2^{20}3^{18}5^811^4$  & $6600=2^3\cdot3\cdot5^2\cdot11$\\
 $E^{(2)}$ & $y^2=x^3-24651x + 1453194$ & $2^4\cdot3\cdot13$ & $2^{10}3^{20}13$  & $312=2^3\cdot3\cdot13$\\
 $E^{(3)}$ & $y^2=x^3-97227x+10789254$ & $2\cdot3\cdot5\cdot11$ & $2^{16}3^{16}5^211^2$  & $1320=2^3\cdot3\cdot5\cdot11$\\ 
 $E^{(4)}$ & $y^2=x^3-7155x+187650$ & $2^4\cdot5^2\cdot11$ & $-2^{10}3^{12}5^311^2$  & $88=2^3\cdot11$\\
 $E^{(5)}$ & $y^2=x^3+274725x+126596250$ & $2^4\cdot3\cdot5^2\cdot11^2$ & $-2^{10}3^{18}5^611^3$  & $264=2^3\cdot3\cdot11$\\ 
 $E^{(6)}$ & $y^2=x^3-24003x+1296702$ & $2\cdot3\cdot5$ & $2^{14}3^{18}5^2$  & $120=2^3\cdot3\cdot5$\\ 
 $E^{(7)}$ & $y^2=x^3-132867x+17106174$ & $2\cdot3\cdot5\cdot11$ & $2^{16}3^{14}5^411^2$  & $1320=2^3\cdot3\cdot5\cdot11$\\
 $E^{(8)}$ & $y^2=x^3-1196883x+46619118$ & $2\cdot3\cdot5\cdot23$ & $2^{18}3^{22}5^223^2$  & $2760=2^3\cdot3\cdot5\cdot23$\\ 
 \hline
\end{tabular}
\end{center}
\caption{ }
\label{table:2}
\end{table}
\begin{table}[h!]
\begin{center}
\begin{tabular}{ |c|c|c| } 
 \hline
 $E^{(i)}$ & $W(E^{(i)}_t)$ given by local $W_p$ & periods of $W_p(E_t^{(i)})$ and $\left(\frac{-1}{|t_{(6\Delta)}|}\right)$\\ 
 \hline
 $E^{(1)}$ & $-W_2\left(E^{(1)}_t\right)\cdot W_3\left(E^{(1)}_t\right)\cdot W_5\left(E^{(1)}_t\right)\cdot W_{11}\left(E^{(1)}_t\right)\cdot\left(\frac{-1}{|t_{(330)}|}\right)$ & $8,3,25,11,132$\\
 \hline
 $E^{(2)}$ & $-W_2\left(E^{(2)}_t\right)\cdot W_3\left(E^{(2)}_t\right)\cdot W_{13}\left(E^{(2)}_t\right)\cdot\left(\frac{-1}{|t_{(78)|}}\right)$ & $8,3,13,24$\\
 \hline
 $E^{(3)}$ & $-W_2\left(E^{(3)}_t\right)\cdot W_3\left(E^{(3)}_t\right)\cdot W_5\left(E^{(3)}_t\right)\cdot W_{11}\left(E^{(3)}_t\right)\cdot\left(\frac{-1}{|t_{(330)}|}\right)$ & $8,3,5,11,132$\\
 \hline 
 $E^{(4)}$ & $-W_2\left(E^{(4)}_t\right)\cdot W_3\left(E^{(4)}_t\right)\cdot W_5\left(E^{(4)}_t\right)\cdot W_{11}\left(E^{(4)}_t\right)\cdot\left(\frac{-1}{|t_{(330)}|}\right)$ & $8,3,1,11,132$\\
 \hline
 $E^{(5)}$ & $-W_2\left(E^{(5)}_t\right)\cdot W_3\left(E^{(5)}_t\right)\cdot W_5\left(E^{(5)}_t\right)\cdot W_{11}\left(E^{(5)}_t\right)\cdot\left(\frac{-1}{|t_{(330)}|}\right)$ & $8,3,1,1,132$\\ 
 \hline
 $E^{(6)}$ & $-W_2\left(E^{(6)}_t\right)\cdot W_3\left(E^{(6)}_t\right)\cdot W_5\left(E^{(6)}_t\right)\cdot\left(\frac{-1}{|t_{(30)}|}\right)$ & $8,3,5,12$\\ 
 \hline
 $E^{(7)}$ & $-W_2\left(E^{(7)}_t\right)\cdot W_3\left(E^{(7)}_t\right)\cdot W_5\left(E^{(7)}_t\right)\cdot W_{11}\left(E^{(7)}_t\right)\cdot\left(\frac{-1}{|t_{(330)}|}\right)$ & $8,3,5,11,132$\\
 \hline
 $E^{(8)}$ & $-W_2\left(E^{(8)}_t\right)\cdot W_3\left(E^{(8)}_t\right)\cdot W_5\left(E^{(8)}_t\right)\cdot W_{23}\left(E^{(8)}_t\right)\cdot\left(\frac{-1}{|t_{(690)}|}\right)$ & $8,3,5,23,552$\\ 
 \hline
\end{tabular}
\end{center}
\caption{ }
\label{table:3}
\end{table}

\begin{theorem}\label{thm:09}
Let $q$ be a squarefree integer such that the rank of $E^{(i)}_q(\Q)$ is positive, for at least one $i\in \lbrace 1,\dots, 8\rbrace$. Then there exist infinitely many rational $D(q)$-quintuples.
\end{theorem}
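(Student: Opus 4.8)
The plan is to run the mechanism of equations (\ref{eq02})--(\ref{eq04}) from the introduction; the real work is to rule out degeneracies. Fix $i\in\{1,\dots,8\}$ with $\mathrm{rank}\,E^{(i)}_q(\Q)>0$. Recall from Section~\ref{sec:03} that $Q_i$ is a \emph{good} point: via the parametrization (\ref{eq20}) and Proposition~\ref{prop:6}, with $r(u)=1$ and $x(u)=c(u)+p(u)+1$, it attaches to a point $(c(u),z_1(u))$ on $\mathcal{C}$ a \emph{non-degenerate} $D\big(\alpha(u)x(u)^2\big)$-quintuple $\{A(u),B(u),C(u),D(u),x^2(u)\}$ of rational functions, and $P_{Q_i}(u)$ is the squarefree polynomial with
\begin{equation*}
\alpha(u)\,x(u)^2 = P_{Q_i}(u)\,g(u)^2, \qquad g\in\Q(u)^{\ast},
\end{equation*}
coming from (\ref{eq03}). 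Since $\mathrm{rank}\,E^{(i)}_q(\Q)>0$ and $E^{(i)}_q\colon P_{Q_i}(u)=qs^2$ is an elliptic curve over $\Q$ (Section~\ref{sec:03}), this curve has infinitely many rational points; as its $u$-coordinate makes it a double cover of the $u$-line, infinitely many distinct values $u_1\in\Q$ occur, each with some $s_1\in\Q$ satisfying $P_{Q_i}(u_1)=qs_1^2$.

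Next I would discard finitely many bad parameters. The rational functions $A,B,C,D,x^2,\alpha,g,g^{-1},z$ and the denominators appearing in (\ref{eq20}) have altogether finitely many poles; the polynomial $P_{Q_i}$ has finitely many zeros (these are the $u_1$ with $s_1=0$); and since $Q_i$ is good, none of the conditions ``two elements of the quintuple coincide'', ``an element is zero'', ``$\alpha=0$'' holds identically in $\Q(u)$, so each cuts out a finite set of $u_1$. Removing all of these still leaves infinitely many $u_1$. For each surviving $u_1$ put $s_1':=s_1\,g(u_1)\in\Q^{\ast}$; then
\begin{equation*}
\alpha(u_1)\,x(u_1)^2 = P_{Q_i}(u_1)\,g(u_1)^2 = q\,(s_1')^2,
\end{equation*}
which is (\ref{eq02}) with $s_1'$ in place of $s_1$. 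Proposition~\ref{prop:6} then yields a genuine non-degenerate rational $D\big(\alpha(u_1)x(u_1)^2\big)$-quintuple (its condition (iii) holds because $z(u_1)^2 = A(u_1)D(u_1)+\alpha(u_1)x(u_1)^2$), and scaling all five elements by $1/s_1'$ produces a rational $D(q)$-quintuple, since $\alpha(u_1)x(u_1)^2/(s_1')^2=q$.

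To see that infinitely many of these quintuples are distinct, I would look at the element $x^2(u_1)/s_1'$, whose square is
\begin{equation*}
\left(\frac{x^2(u_1)}{s_1'}\right)^{\!2} = \frac{x^4(u_1)}{(s_1')^2} = q\,\frac{x^2(u_1)}{\alpha(u_1)}.
\end{equation*}
The rational function $q\,x(u)^2/\alpha(u)$ is non-constant: were it constant, $\alpha(u)x(u)^2$ would be a constant multiple of $x(u)^4$, forcing $P_{Q_i}(u)$ to be a constant times a square in $\Q(u)$, impossible for a squarefree polynomial of degree $3$ or $4$. Hence $x^2(u_1)/s_1'$ assumes infinitely many values over our infinite set of $u_1$, so the $5$-element sets $\{A(u_1)/s_1',\dots,x^2(u_1)/s_1'\}$ cannot all lie among finitely many sets; thus there are infinitely many rational $D(q)$-quintuples.

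The main obstacle is the bookkeeping of the middle step: one must be certain that passing from the generic quintuple over $\Q(u)$ to its specialization at $u_1$ introduces, apart from finitely many $u_1$, no pole, no coincidence or vanishing among the five elements, and no vanishing of $\alpha$, and that the resulting quintuples are genuinely distinct. All of this is governed by the non-triviality of the corresponding identities in $\Q(u)$, which is precisely why we restricted attention to \emph{good} points $Q_i$ and to squarefree $P_{Q_i}$ of positive degree; granting that, only a routine finiteness count remains.
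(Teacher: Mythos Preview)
Your proof is correct and follows essentially the same route as the paper's: produce infinitely many $u_1$ from the positive rank, discard finitely many degenerate specializations, scale to get $D(q)$-quintuples, and argue distinctness by passing to squares of the scaled elements (which depend on $u_1$ alone). The only cosmetic differences are that the paper writes out the explicit polynomial quintuple for $i=6$ rather than arguing with the abstract $Q_i$, and for distinctness the paper looks at the full ``square quintuple'' $(A^2,\dots,E^2)$ and leaves non-constancy as an easy exercise, whereas you isolate the single element $x^2/s_1'$ and give an explicit reason (via the squarefree degree of $P_{Q_i}$) why its square $q\,x^2/\alpha$ is non-constant.
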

\begin{proof}
Assume $i=6$ so that the rank of $E^{(6)}_q(\Q)$ is positive. If $i$ is any other index, the proof is similar. The quintuple
\[
\Big( 9(u-1)^3(4u-1)(u+1),
\]
\[
(u^4-6u^3+5u+27/4)(u^4-6u^3+8u^2-3u+3/4),
\]
\[
u^8-16u^6+32u^5-39/2u^4+10u^3+6u^2-9u+9/16,
\]
\[
(4u^4-16u^3+14u^2+4u+3)(u^4-2u^3+5/2u^2+3/4),
\]
\[
9(u^2-2u-1/2)^2(u^2+1/2)^2 \Big)
\]
is a $D(q(u))$-quintuple for 
\[
q(u)=(4u^4-20u^3+13u^2+12u)\cdot\left(3(u-1)(u^2+1/2)(u^2-2u-1/2)\right)^2.
\]
Evaluating the elements and $q(u)$ at $u_1\in \QQ$ we obtain a rational $D(q(u_1))$-quintuple for all but finitely many exceptions $u_1.$ The possible exceptions are rationals $u_1$ such that $q(u_1)=0,$ or any element of the function quintuple evaluated at $u_1$ is equal to zero, or any two elements of the function quintuple evaluated at $u_1$ are equal. Such $u_1$ are roots of finitely many polynomials in one variable so the set of exceptions is finite.

Since the rank of $E_q^{(6)}(\QQ)$ is assumed positive, we know there exist infinitely many pairs of rationals $(y_1,u_1)$ that satisfy the equation
\begin{equation}\label{eq24}
y^2q=4u^4-20u^3+13u^2+12u.
\end{equation} For fixed $y_1$ and $q$, the previous equation has at most four different solutions in variable $u$, so there are infinitely many different $y_1$ (and in a similar manner, infinitely many different $u_1$) among the pairs $(y_1,u_1)$ which satisfy (\ref{eq24}).

For each such pair $(y_1,u_1)$, let $\displaystyle \eta=\frac{1}{y_1\cdot 3(u_1-1)(u_1^2+1/2)(u_1^2-2u_1-1/2)}.$ It holds that $q(u_1)\cdot \eta^2=q.$

Then the quintuple
\[
\Big( 9(u_1-1)^3(4u_1-1)(u_1+1)\eta,
\]
\[
(u_1^4-6u_1^3+5u_1+27/4)(u_1^4-6u_1^3+8u_1^2-3u_1+3/4)\eta,
\]
\begin{equation}\label{eq25}
(u_1^8-16u_1^6+32u_1^5-39/2u_1^4+10u_1^3+6u_1^2-9u_1+9/16)\eta,
\end{equation}
\[
(4u_1^4-16u_1^3+14u_1^2+4u_1+3)(u_1^4-2u_1^3+5/2u_1^2+3/4)\eta,
\]
\[9(u_1^2-2u_1-1/2)^2(u_1^2+1/2)^2\eta \Big)
\]
is a rational $D(q)$-quintuple for all but finitely many exceptions of pairs $(y_1,u_1).$ The last thing left to argue is that the collection of rational $D(q)$-quintuples just described is not finite. For each such quintuple $( A,B,C,D,E )$ we look at the \textit{square} quintuple $( A^2, B^2, C^2, D^2, E^2 ).$ 

If the described collection of rational $D(q)$-quintuples were finite, then the collection of associated square quintuples would also be finite. Elements of square quintuples are rational functions in variable $u_1.$ It is an easy exercise to show that only finitely many different $u_1$ occur if there are only finitely many square quintuples. Since this is false, so is the assumption that there are only finitely many rational $D(q)$-quintuples described by (\ref{eq25}).
\end{proof}

\begin{proof}[Proof of Theorem \ref{thm:02}]
Theorem \ref{thm:07} c) implies Theorem \ref{thm:02} for the curve $E^{(6)}.$ The proofs for the other curves $E^{(i)}, i\neq 6$ are similar and omitted. The periods $N_i$ are listed in Table \ref{table:2}.
\end{proof}

\begin{proof}[Proof of Theorem \ref{thm:03}]
The least common denominator of the periods $N_i$ from Theorem \ref{thm:02} is $394680.$ The proof for negative $q$ is conducted in the same way as the proof for positive $q,$ so we assume $q$ is a squarefree positive integer. Theorem \ref{thm:07} d) implies that if $q \mmod 120$ is in one of forty seven residue classes the rank of $E_q^{(6)}(\QQ)$ is positive. Combining results for other curves $E^{(i)}$ we conclude that if $q$ is in one of $295026$ residue classes $\mmod 394680$ at least one $E^{(i)}_q(\QQ)$ has positive rank. Theorem \ref{thm:09} concludes our proof.
\end{proof}
For completeness, we list the $D(q(u))$-quintuples for all $E_i,  i \in \lbrace 1,\dots, 8\rbrace.$
\[
\Big( 900u^4+4320u^3-1161u^2-3438u+1404,
\]
\[
1600u^4-1600u^3+1100u^2-920u+396,
\]
\begin{equation}\label{eq26}
100u^4+1760u^3-1201u^2-542u+324,
\end{equation}
\[
2500u^4-4000u^3+959u^2+514u+36,
\]
\[
3600u^4-2880u^3-1584u^2+864u+324\Big)
\]
is a $D\left((-1200u^3 + 1645u^2 - 410u - 35)\cdot\left[6(10u^2-4u-3)\right]^2\right)$-quintuple,
\[
\Big( 378u^2 - 405u + 108,
\]
\[
32u^4 - 64u^3 + 122u^2 - 117u + 36,
\]
\[
32u^4 - 16u^3 + 80u^2 - 78u + 18,
\]
\[
128u^4 - 160u^3 + 26u^2 + 15u,
\]
\[
288u^4 - 288u^3 + 90u^2 - 9u\Big)
\]
is a $D\left((-80u^4+148u^3-65u^2-12u+9)\cdot\left[3(4u-1)\right]^2\right)$-quintuple,
\[
\Big( 352u^4 - 244u^3 - 129u^2 + 122u - 20,
\]
\[
4u^6 + 16u^5 + 48u^4 + 48u^3 - 164u^2 + 104u - 20,
\]
\[
4u^6 - 24u^5 + 112u^4 - 120u^3 + 47u^2 - 14u + 4,
\]
\[
16u^6 - 16u^5 - 32u^4 + 100u^3 - 105u^2 + 58u - 12,
\]
\[
36u^6 - 96u^5 + 112u^4 - 88u^3 + 48u^2 - 16u + 4\Big)
\]
is a $D\left((-28u^4-44u^3+157u^2-106u+21)\cdot\left[2(3u^2-u+1)\cdot(u-1)\right]^2\right)$-quintuple,
\[
\Big( -54u^2 + 171u - 90,
\]
\[
32u^4 - 96u^3 - 6u^2 + 127u - 30,
\]
\[
32u^4 + 144u^3 - 24u^2 - 26u - 18,
\]
\[
128u^4 + 96u^3 - 6u^2 + 31u - 6,
\]
\[
288u^4 + 576u^3 - 54u^2 - 117u - 18\Big)
\]
is a $D\left((112u^4-100u^3-93u^2+92u-11)\cdot\left[3\cdot(4u+1)\right]^2\right)$-quintuple,
\[
\Big( 450u^4 - 1665u^3 + 2052u^2 - 909u + 72,
\]
\[
50u^4 - 545u^3 + 1092u^2 - 317u + 44,
\]
\[
800u^4 - 350u^3 + 30u^2 - 158u + 2,
\]
\[
1250u^4 - 125u^3 + 192u^2 - 41u + 20,
\]
\[
4050u^4 - 405u^3 - 648u^2 - 81u\Big)
\]
is a $D\left((300u^3-65u^2+16u+1)\cdot\left[9\cdot(5u+1)\cdot(u-1)\right]^2\right)$-quintuple,
\[
\Big( 576u^5 - 1296u^4 + 288u^3 + 1152u^2 - 864u + 144,
\]
\[
16u^8 - 192u^7 + 704u^6 - 736u^5 - 72u^4 - 80u^3 + 624u^2 - 264u + 81,
\]
\[
16u^8 - 256u^6 + 512u^5 - 312u^4 + 160u^3 + 96u^2 - 144u + 9,
\]
\[
64u^8 - 384u^7 + 896u^6 - 1024u^5 + 528u^4 - 128u^3 + 288u^2 + 48u + 36,
\]
\[
144u^8 - 576u^7 + 576u^6 - 288u^5 + 504u^4 + 144u^3 + 144u^2 + 72u + 9\Big)
\]
is a $D\left((4u^4-20u^3+13u^2+12u)\left[12\cdot(2u^2+1)\cdot(2u^2-4u-1)\cdot(u-1)\right]^2\right)$-quintuple,
\[
\Big( 25u^2 + 30u + 20,
\]
\[
4u^2 + 24u + 20,
\]
\[
9u^2 - 2u - 4,
\]
\[
u^2 + 14u + 12,
\]
\[
16u^2 - 4\Big)
\]
is a $D\left((-40u^3-19u^2+38u+21)\cdot 2^2\right)$-quintuple, and
\[
\Big( 324u^4 + 423u^2 - 198u + 180,
\]
\[
64u^4 + 320u^3 - 52u^2 - 248u + 60,
\]
\[
100u^4 - 256u^3 + 239u^2 + 106u + 36,
\]
\[
4u^4 + 128u^3 - 49u^2 - 86u + 12,
\]
\[
144u^4 - 576u^3 + 432u^2 + 288u + 36\Big)
\]
is a $D\left((-144u^3+61u^2+94u-11)\cdot\left[6\cdot(2u^2-4u-1)\right]^2\right)$-quintuple.

\section*{Acknowledgements}

The author was supported by the Croatian Science Foundation under the project no.~1313. 

The author is grateful to Andrej Dujella for sharing his unpublished results, as well as motivation for this paper and to Julie Desjardins for helpful comments.

The author is very grateful to his mentor Matija Kazalicki for guidance through this paper.
%References


\begin{thebibliography}{99}
\bibitem{Magma} W. Bosma, J. Cannon, C, Playoust, The Magma algebra system. I. The user language, Journal of Symbolic Computation, 24.3-4 (1997), 235–265.
\bibitem{Desj} J. Desjardins, Root number of twists of an elliptic curve, Journal de Théorie des Nombres de Bordeaux 32.1 (2020), 73-101.
\bibitem{DraKa} G. Dražić, M. Kazalicki, Rational $D(q)$-quadruples, preprint, \textbf{//https://arxiv.org/abs/2002.02006}  (2020).
\bibitem{Euler} A. Dujella, An extension of an old problem of Diophantus and Euler, Fibonacci Quarterly 37.4 (1999), 312-314.
\bibitem{ANote} A. Dujella, A note on Diophantine quintuples, In Algebraic Number Theory and Diophantine Analysis: Proceedings of the International Conference held in Graz, Austria, August 30 to September 5, 1998, Walter de Gruyter (2011), 123-127.
\bibitem{DujeAMS} A. Dujella, What is a Diophantine $m$-tuple?, Notices of the AMS 63.7 (2016), 772-774.
\bibitem{DujeTB} A. Dujella, Number Theory, \v{S}kolska knjiga, Zagreb, 2021.
\bibitem{RatQuint} A. Dujella, C. Fuchs, On a problem of Diophantus for rationals, Journal of Number Theory 132.10 (2012), 2075-2083.
\bibitem{DuKaPe} A. Dujella, M. Kazalicki, V. Petričević, $D(n)$-quintuples with square elements, preprint, \textbf{//https://arxiv.org/abs/2011.01684} (2020).
\bibitem{Halb} E. Halberstadt, Signes locaux des courbes elliptiques en 2 et 3, Comptes Rendus de l'Académie des Sciences-Series I-Mathematics 326.9 (1998), 1047-1052.
\bibitem{Rizzo} O. G. Rizzo, Average root numbers for a nonconstant family of elliptic curves, Compositio Mathematica 136.1 (2003), 1-23.
\bibitem{Rohrlich} D. E. Rohrlich, Variation of the root number in families of elliptic curves, Compositio Mathematica 87.2 (1993), 119-151.
\bibitem{Rohrlich2} D. E. Rohrlich, Galois theory, elliptic curves, and root numbers, Compositio Mathematica 100.3 (1996), 311-349.
\bibitem{Silv} J. H. Silverman, The Arithmetic of Elliptic Curves, Springer, 2009.
\end{thebibliography}
\end{document}